\newtheorem{thm}{Theorem}[section]
\newtheorem{defn}[thm]{Definition}
\newtheorem{rem}[thm]{\bf{Remark}}
\newtheorem{problem}[thm]{Example}
\numberwithin{equation}{section}
\begin{document}
%\leftline{ \scriptsize \it Bulletin of the Iranian MathematicalSociety  Vol. {\bf\rm XX} No. X {\rm(}201X{\rm)}, pp XX-XX.}

%\vspace{1.0 cm}

\begin{center}\large{{\bf{Composite mapping on hesitant fuzzy soft classes}}}

\vspace{0.5cm}

%{\bf{
Manash Jyoti Borah$^{1}$ and Bipan Hazarika$^{2\ast}$
%}}%$^{1}$ and {\bf{Syed Abdul Mohiuddine}}$^{2}$

\vspace{.2cm}
$^{1}$Department of Mathematics, Bahona College,  Jorhat-785 101, Assam, India\\
Email:mjyotibora9@gmail.com

\vspace{.2cm}

$^{2}$Department of Mathematics, Rajiv Gandhi University, Rono Hills, Doimukh-791 112, Arunachal Pradesh, India\\

Email:  bh\_rgu$@$yahoo.co.in
\end{center}
\vspace{.5cm}
\title{}
\author{}
\thanks{{$^{\ast}$The corresponding author}}

\begin{abstract}
% {\bf Abstract:}
In this paper, we study composite mapping on hesitant fuzzy soft classes. We also establish some interesting properties of this notion and support them with examples. Also definitions of fuzzy soft image and fuzzy soft inverse image put forward by Kharal and Ahmad \cite{kharal} have been reintroduced in hesitant fuzzy soft classes.\\

%{\bf Keywords:}
 Keywords: Fuzzy soft sets; Hesitant fuzzy sets; Hesitant fuzzy soft sets; Hesitant fuzzy soft Classes;  Hesitant fuzzy soft mapping. \\

AMS subject classification no: 03E72.
\end{abstract}

\maketitle
\pagestyle{myheadings}
\markboth{\rightline {\scriptsize Borah, Hazarika}}
         {\leftline{\scriptsize Composite mapping on hesitant fuzzy soft ...}}

\maketitle
\section{Introduction}

The Hesitant fuzzy set, as one of the extension of Zadeh \cite{zadeh} fuzzy set, allows the membership degree that an element to a set
presented by several possible values, and  it can express the hesitant information
more comprehensively than other extensions of fuzzy set.  Torra and Narukawa \cite{torra2} introduced the
concept of hesitant fuzzy set.  Xu and Xia \cite{xu}
defined the concept of hesitant fuzzy element, which can be considered as
the basic unit of a hesitant fuzzy set, and is a simple and effective tool used to express the
decision makers’ hesitant preferences in the process of decision making. So many  researchers  has done lots of research work on aggregation, distance,
similarity and correlation measures, clustering analysis, and decision making with
hesitant fuzzy information.  Babitha and John \cite{babitha} defined another important soft set Hesitant fuzzy soft sets. They introduced basic operations such as intersection, union, compliment  and De Morgan's law was proved.  Wang, et al. \cite{wang} applied hesitant fuzzy soft sets in multicriteria group decision making problems.   \\

There are many theories like theory of probability, theory of fuzzy sets, theory of
intuitionistic fuzzy sets, theory of rough sets etc. which can be considered as mathematical
tools for dealing with uncertain data, obtained in various fields of engineering, physics,
computer science, economics, social science, medical science, and of many other diverse
fields. But all these theories have their own difficulties. The most appropriate theory for
dealing with uncertainties is the theory of fuzzy sets, introduced by L.A. Zadeh \cite{zadeh} in 1965.
This theory brought a paradigmatic change in mathematics. But there exists difficulty, how to
set the membership function in each particular case. The theory of intuitionistic fuzzy sets (see \cite{atanassov}) is a
more generalized concept than the theory of fuzzy sets, but this theory has the same
difficulties. All the above mentioned theories are successful to some extent in dealing with
problems arising due to vagueness present in the real world. But there are also cases where
these theories failed to give satisfactory results, possibly due to inadequacy of the
parameterization tool in them. As a necessary supplement to the existing mathematical tools
for handling uncertainty, in 1999, Molodtsov \cite{molodstov} initiated the theory of soft sets as
a new mathematical tool to deal with uncertainties while
modelling the problems in engineering, physics, computer
science, economics, social sciences, and medical sciences.
In \cite{MolodtsovLeonov}, Molodtsov et al. successfully applied soft sets in
directions such as smoothness of functions, game theory,
operations research, Riemann integration, Perron integration, probability, and theory of measurement. Maji et al. \cite{majietal1} gave the first practical application of soft sets in
decision-making problems.  Maji et al. \cite{majietal2} defined
and studied several basic notions of the soft set theory. Also \c{C}a\v{g}man et al. \cite{cagman} studied several basic notions of the soft set theory. V. Torra \cite{torra,torra2} and Verma and Sharma \cite{verma} discussed the relationship between hesitant fuzzy set and showed that the envelope of hesitant fuzzy set is an intuitionistic fuzzy set. A lot of work has been done about hesitant fuzzy sets, however, little has been done about the hesitant fuzzy soft sets.
In \cite{kharal2}, Kharal and Ahmad introduced the notions of a mapping on the classes of soft sets and studied the properties of soft images. In \cite{kharal}, Kharal and Ahmad introduced the notions of a mapping on the classes of fuzzy soft sets.
\\

In this paper, we study composite mapping on hesitant fuzzy soft classes. We also establish some interesting properties of this notion and support them with examples.

\section{Preliminary Results}

In this section we recall some basic concepts and definitions
regarding fuzzy soft sets, fuzzy soft image,  fuzzy soft inverse image, hesitant fuzzy  set and hesitant fuzzy soft set.

\begin{defn}
\cite{maji} Let $U$ be an initial universe and $F$ be a set of parameters. Let $\tilde{P}(U)$ denote the power set of $U$ and $A$ be a
non-empty subset of $F.$  Then $F_A$ is called a
fuzzy soft set over U where $F:A\rightarrow \tilde{P}(U)$is a
mapping from $A$ into $\tilde{P}(U).$
\end{defn}

\begin{defn}
\cite{molodstov}  $F_E$ is called a soft
set over $U$ if and only if $F$ is a mapping of $E$ into the set of all
subsets of the set $U.$
\end{defn}
In other words, the soft set is a parameterized family of subsets
of the set $U.$ Every set $F(\epsilon),$ $\epsilon\tilde{\in} E,$
from this family may be considered as the set of
$\epsilon$-element of the soft set $F_E$  or as the set of
$\epsilon$-approximate elements of the soft set.
\begin{defn}
	\cite{kharal} Let $X$ be a universe and $E$ a set of attributes. Then the collection of all fuzzy soft sets over $U$ with attributes from $E$ is called a fuzzy soft class and is denoted by $\overline{(X,E)}.$
\end{defn}
\begin{defn}
	\cite{kharal} Let $ \overline{(X,E)} $ and $ \overline{(Y,E')} $ be classes of fuzzy soft sets over $ X $ and $ Y $ with attributes from $ E $ and $ E' $
	respectively. Let $ p:U\longrightarrow V $ and  $ q:E\longrightarrow E' $ be mappings. Then a mappings $ f=(p,q):\overline{(X,E)}\longrightarrow\overline{(Y,E')} $ is defined as follows. \\	For a fuzzy soft set $F_A$ in $\overline{(X,E)},$ $ f(F_A) $ is a fuzzy soft set in $ \overline{(Y,E')} $ obtained as follows: for $ \beta\tilde{\in}q(E)\tilde{\subseteq}E'$ and $ y\tilde{\in}Y, $\\
	\begin {equation*}
	f(F_A)(\beta)(y)=
	\begin {cases}\bigcup_{s\tilde{\in}p^-1(y)}(\bigcup_{\alpha\tilde{\in}q^-1(\beta)\cap A}F_A(\alpha))(x)\\
	& \textrm {if $p^-1(y)\neq \phi,q^-1(\beta)\cap A \neq \phi$ }\\
	0 & \textrm{otherwise}
\end{cases}
\end{equation*}	
$ f(F_A)$  is called a fuzzy soft image of a fuzzy soft set  $F_A.$ 
\end{defn}
\begin{defn}
	\cite{kharal} Let $ p:U\longrightarrow V $ and  $ q:E\longrightarrow E' $ be mappings.Let $ f:\overline{(X,E)}\longrightarrow\overline{(Y,E')} $ be a mapping and $ G_B ,$ a fuzzy soft set in $\overline{(Y,E')}, $ where $ B\tilde{\in}E'. $ Then $ f^{-1}(G_B) ,$ is a fuzzy soft set on $ \overline{(X,E)}, $ defined as follows.  For $ \alpha\tilde{\in}q^{-1}(B)\tilde{\subseteq}E $ and $ x\tilde{\in}U, $\\
	\begin {equation*}
	f^{-1}(G_B)(\alpha)(x)=
	\begin {cases}G_B(q(\alpha))(p(x)),\\
	& \textrm {if $q(\alpha)\tilde{\in}B$ }\\
	0 & \textrm{otherwise}
\end{cases}
\end{equation*}	
$f^{-1}(G_B)  $ is called a fuzzy soft inverse image of $ G_B. $
\end{defn}
\begin{defn}
	\cite{torra} Given a fixed set $X, $ then a hesitant fuzzy set (shortly HFS) in X is in terms of a function that when applied to X return a subset of $[0, 1].$ We express the HFS by a mathematical symbol:\\
	$ F=\{<h, \mu_{F}(x)> : h\in X\}, $ 
	where $ \mu_{F}(x) $ is a set of some values in [0,1], denoting the possible membership degrees of the element $ h\in X $ to the set $ F. $ $ \mu_{F}(x) $ is called a hesitant fuzzy element (HFE) and $ H $ is the set of all HFEs. 
\end{defn}

\begin{defn}
	\cite{torra} Given an hesitant fuzzy set $F, $ define below it lower and upper bound as \\
	lower bound $ F^{-}(x)=\min F(x).$\\
	upper bound $ F^{+}(x)=\max F(x).$
\end{defn}

\begin{defn}
	\cite{torra} Let $ \mu_{1},  \mu_{2}\in H $ and three operations are defined as follows:
	\begin{enumerate}
	\item[(1)] $ \mu_{1}^C=\cup_{\gamma_{1}\in\mu_{1}} \{1-\gamma_{1}\}; $		
	\item[	(2)] $ \mu_{1}\cup\mu_{2}=\cup_{\gamma_{1}\in\mu_{1}, \gamma_{2}\in\mu_{2}}\max \{\gamma_{1},\gamma_{2} \} ; $
				\item[(3)] $ \mu_{1}\cap\mu_{2}=\cap_{\gamma_{1}\in\mu_{1}, \gamma_{2}\in\mu_{2}}\min \{\gamma_{1},\gamma_{2} \}. $		
	\end{enumerate}	
	
\end{defn}
\begin{defn}
	\cite{wang} Let $U$ be an initial universe and $E$ be a set of parameters. Let $\tilde{F}(U)$ be the set of all hesitant fuzzy subsets of $U. $ Then $F_E$ is called a hesitant fuzzy soft set(HFSS) over $U,$  where $\tilde {F}:E\rightarrow \tilde{F}(U). $\\
	A HFSS is a parameterized family of hesitant fuzzy subsets of $U,$ that is, $\tilde{F}(U).$ For all $\epsilon\tilde{\in} E,$  $F(\epsilon)$ is referred to as the set of $ \epsilon -$ approximate elements of the HFSS $F_E.$ It can be written as 
	 $\tilde{F(\epsilon)}=\{<h, \mu_{\tilde{F(\epsilon)(x)}}> : h\in U\}.$\\
	 Since  HFE can represent the situation, in which different membership function are considered possible (see \cite{torra}), $ \mu_{\tilde{F(\epsilon)(x)}} $ is a set of several possible values, which is the hesitant fuzzy membership degree. In particular, if $ \tilde{F(\epsilon)} $ has only one element,  $ \tilde{F(\epsilon)} $  can be called a hesitant fuzzy soft number. For convenience, a hesitant fuzzy soft number (HFSN) is denoted by $ \{<h, \mu_{\tilde{F(\epsilon)(x)}}>\}.$ 
	 
\end{defn}
\begin{problem}\label{exam36}
	Suppose $ U=\{a,b\} $ be an initial universe and $ E=\{e_1, e_2, e_3, e_4\} $ be a set of parameters. Let $ A=\{e_1, e_2\} .$ Then the hesitant fuzzy soft set $ F_A $ is given as  $F_A=\{F(e_1)=\{<a,\{0.6, 0.8\}>, <b, \{0.8, 0.4, 0.9\}>\},  F(e_2)=\{<a,\{0.9, 0.1, 0.5\}>, <b,\{0.2 \}>\}\}.$
\end{problem}
\begin{defn}
	The union of two hesitant fuzzy soft sets $ F_A $ and $ G_B $ over $ (U,E) ,$ is the hesitant fuzzy soft set $ H_C ,$ where $ C=A\cup B $ and  $\forall e\tilde{\in}C,$
		
	$\mu_{H(e)}=\begin{cases}
	\mu_{F(e)}, & \textrm{if $e\tilde{\in}A-B;$}\\
	\mu_{G(e)}, & \textrm{if $ e\tilde{\in}A-B;$}\\
	\mu_{F(e)}\cup \mu_{G(e)}, & \textrm{if $e\tilde{\in}A\cap B.$}\end{cases}$
	
	We write $ F_A \tilde{\cup}G_B=H_C.$
\end{defn}

\section{Mapping on hesitant fuzzy soft classes}

\begin{defn}
Let $U$ be a hesitant fuzzy soft universe and $E$ a set of attributes. Then the collection of all hesitant fuzzy soft sets over $U$ with attributes from $E$ is called a hesitant fuzzy soft class and is denoted by $\overline{(U,E)}.$
\end{defn}
\begin{defn}
Let $ \overline{(U,E)} $ and $ \overline{(V,E')} $ be classes of hesitant fuzzy soft sets over $ U $ and $ V $ with attributes from $ E $ and $ E' $
respectively. Let $ p:U\longrightarrow V $ and  $ q:E\longrightarrow E' $ be mappings. Then a hesitant fuzzy soft mappings $ f=(p,q):\overline{(U,E)}\longrightarrow\overline{(V,E')} $ is defined as follows; \\
For a hesitant fuzzy soft set $F_A$ in $\overline{(U,E)},$ $ f(F_A) $ is a hesitant fuzzy soft set in $ \overline{(V,E')} $ obtained as follows: for $ \beta\tilde{\in}q(E)\tilde{\subseteq}E'$ and $ y\tilde{\in}V, $ $ f(F_A)(\beta)(y)=\bigcup_{\alpha\tilde{\in}q^-1(\beta)\cap A,s\tilde{\in}p^-1(y)} (\alpha)\mu_s$ $ f(F_A)$  is called a hesitant fuzzy soft image of a hesitant fuzzy soft set  $F_A.$ Hence $ (F_A, f(F_A))\tilde{\in}f, $ where $ F_A\tilde{\subseteq}\overline{(U,E)}, f(F_A)\tilde{\subseteq}\overline{(V,E')}. $
\end{defn}
\begin{defn}
If  $ f:\overline{(U,E)}\longrightarrow\overline{(V,E')} $ be a hesitant fuzzy soft mapping, then  hesitant fuzzy soft class $\overline{(U,E)}$ is called the domain of $f $ and the hesitant fuzzy soft class $ \{G_B\tilde{\in}\overline{(V,E')}: G_B=f(H_A), \textrm{for some}  H_A\tilde{\in\overline{(U,E)}}\} $ is called the range of $ f.$The  hesitant fuzzy soft class$\overline{(V,E')} $  
is called co-domain of $ f. $
\end{defn}
\begin{defn}
Let  $ f=(p,q):\overline{(U,E)}\longrightarrow\overline{(V,E')} $ be a hesitant fuzzy soft mapping and $ G_B, $ a hesitant fuzzy soft set in $\overline{(V,E')},$ where $ p:U\longrightarrow V ,  q:E\longrightarrow E' $ and $ B\tilde{\subseteq}E'. $ Then $ f^{-1}(G_B) $ is a hesitant fuzzy soft set in $\overline{(U,E)} $ defined as follows: for $ \alpha\tilde{\in}q^{-1}(B)\tilde{\subseteq}E $ and $ x\tilde{\in}U, $ 
$f^{-1}(G_B)(\alpha)(x)=(q(\alpha))\mu_{p(x)}  $ $f^{-1}(G_B)  $ is called a hesitant fuzzy soft inverse image of $ G_B. $
\end{defn}

\begin{problem}\label{eam35}
Let $ U=\{a,b,c\} $	and $ V=\{x,y,z\}, E=\{e_1, e_2, e_3, e_4\}, E'=\{e_1', e_2', e_3'\} $ and $ \overline{(U,E)},\overline{(V,E')} $ classes of hesitant fuzzy soft sets. Let $ p(a)=y, p(b)=x, p(c)=y $ and $ q(e_1)=e_2', q(e_2)=e_1', q(e_3)=e_2', q(e_4)=e_3'. $ Let $ A=\{e_1, e_2, e_4\} $. Let us  consider a hesitant fuzzy soft set $ F_A $ in $\overline{(U,E)} $ as\\
		$F_A=\{e_1=\{<a,\{0.6, 0.8\}>, <b, \{0.8, 0.4, 0.9\}>,<c,\{0.3\}> \}\\e_2=\{<a,\{0.9, 0.1, 0.2\}>, <b,\{0.5 \}>,<c,\{0.2, 0.4, 0.6\}> \}\\e_4=\{<a,\{0.3\}>, <b,\{0.2,0.6 \}>,<c,\{ 0.4, 0.8\}> \}\}.$\\ 
	Then the hesitant fuzzy soft image of $ F_A $ under  $ f=(p,q):\overline{(U,E)}\longrightarrow\overline{(V,E')} $ is obtained as \\
$f(F_A)(e_1')(x)=\bigcup_{\alpha\tilde{\in}q^-1(e_1')\cap A,s\tilde{\in}p^-1(x)} (\alpha)\mu_s\\
=\bigcup_{\alpha\tilde{\in}\{e_2\},s\tilde{\in}\{b\}} (\alpha)\mu_s\\
=(e_2)\mu_b=\{0.5\}$\\	
$ f(F_A)(e_1')(y)=\bigcup_{\alpha\tilde{\in}q^-1(e_1')\cap A,s\tilde{\in}p^-1(y)} (\alpha)\mu_s\\
=\bigcup_{\alpha\tilde{\in}\{e_2\},s\tilde{\in}\{a,c\}} (\alpha)\mu_s\\
=(e_2)(\mu_a\cup\mu_c)=\{0.2,0.4,0.9\}$\\	
$ f(F_A)(e_1')(z)=\bigcup_{\alpha\tilde{\in}q^-1(e_1')\cap A,s\tilde{\in}p^-1(z)} (\alpha)\mu_s\\
=\bigcup_{\alpha\tilde{\in}\{e_2\},s\tilde{\in}\phi} (\alpha)\mu_s\\
=(e_2)\mu_\phi=\{0.0\}$\\
$ f(F_A)(e_2')(x)=\bigcup_{\alpha\tilde{\in}q^-1(e_2')\cap A,s\tilde{\in}p^-1(x)} (\alpha)\mu_s\\
=\bigcup_{\alpha\tilde{\in}\{e_1, e_3\}\cap \{e_1,e_2,e_4\},s\tilde{\in}\{b\}} (\alpha)\mu_s\\
=\bigcup_{\alpha\tilde{\in}\{e_1\},s\tilde{\in}\{b\}} (\alpha)\mu_s\\
=(e_1)\mu_b=\{0.8, 0.4, 0.9\}.$\\		
By similar calculations we get\\
$ f(F_A)=\{e_1'=\{<x,\{0.5\}>, <y, \{0.2, 0.4, 0.9\}>,<z,\{0.0\}> \}\\e_2'=\{<x,\{0.8, 0.4, 0.9\}>, <y,\{0.6,0.8 \}>,<z,\{0.0\}> \}\\e_3'=\{<x,\{0.2,0.6\}>, <y,\{0.4,0.8 \}>,<z,\{ 0.0\}> \}\}.$\\ 	
Again consider a hesitant fuzzy soft set $ F_B' $ in $ \overline{(V,E')} $ as\\
$F_B'=\{e_1'=\{<x,\{0.2,0.4\}>, <y, \{0.3, 0.1, 0.8\}>,<z,\{0.4\}> \}\\e_2'=\{<x,\{0.5, 0.3, 0.7\}>, <y,\{0.7 \}>,<z,\{0.7,0.8,0.2\}> \}\\e_3'=\{<x,\{0.9\}>, <y,\{0.6,0.8 \}>,<z,\{ 0.8,0.3\}> \}\}.$\\
Therefore, \\
$f^{-1}(F_B')(e_1)(a)=(q(e_1))\mu_{p(a)}=(e_2')\mu_y=\{0.7\}$\\
$f^{-1}(F_B')(e_1)(b)=(q(e_1))\mu_{p(b)}=(e_2')\mu_x=\{0.5,0.3,0.7\}$\\
$f^{-1}(F_B')(e_1)(c)=(q(e_1))\mu_{p(c)}=(e_2')\mu_y=\{0.7\}.$\\
By similar calculations, we get\\
$f^{-1}(F_B')=\{e_1=\{<a,\{0.7\}>, <b, \{0.5, 0.3, 0.7\}>,<c,\{0.7\}> \}\\e_2=\{<a,\{0.3, 0.1, 0.8\}>, <b,\{0.2,0.4 \}>,<c,\{0.3, 0.1, 0.8\}> \}\\e_3=\{<a,\{0.7\}>, <b,\{0.5,0.3,0.7 \}>,<c,\{ 0.7\}> \}\\
e_4=\{<a,\{0.6,0.8\}>, <b,\{0.9 \}>,<c,\{ 0.6,0.8\}> \}\}.$  
\end{problem}
\begin{defn}
Let $ f=(p,q)$ be a hesitant fuzzy soft mapping of a hesitant fuzzy soft class $\overline{(U,E)}$ into a  hesitant fuzzy soft class $\overline{(V,E')}.$ Then
\begin{enumerate}
\item[(i)] $ f$ is said to be a one-one (or injection) hesitant fuzzy soft mapping if for both  $ p:U\longrightarrow V $ and  $ q:E\longrightarrow E' $  are one-one.
\item[(ii)]  $ f$ is said to be a onto (or surjection) hesitant fuzzy soft mapping if for both  $ p:U\longrightarrow V $ and  $ q:E\longrightarrow E' $  are onto.
\end{enumerate}	
\end{defn}
If $ f$ is both one-one and onto then $ f$ is called a hesitant one-one onto (or bijective) correspondence of hesitant fuzzy  soft mapping. 
\begin{thm}
Let  $ f=(p,q):\overline{(U,E)}\longrightarrow\overline{(V,E')} $ and  $ g=(r,t):\overline{(U,E)}\longrightarrow\overline{(V,E')} $ are two hesitant fuzzy soft mappings. Then $ f $ and $ g $ are equal if and only if  $ p=r $ and $ q=t. $
\end{thm}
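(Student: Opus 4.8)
The statement is an equivalence, so the plan is to prove the two implications separately; the forward implication is routine substitution and the converse carries the real content. Recall first that two hesitant fuzzy soft mappings $f,g:\overline{(U,E)}\longrightarrow\overline{(V,E')}$ are equal precisely when they assign the same image to every hesitant fuzzy soft set, i.e. $f(F_A)=g(F_A)$ for all $F_A\tilde{\in}\overline{(U,E)}$. For the easy direction, assume $p=r$ and $q=t$. The defining formula $f(F_A)(\beta)(y)=\bigcup_{\alpha\tilde{\in}q^{-1}(\beta)\cap A,\, s\tilde{\in}p^{-1}(y)}(\alpha)\mu_s$ depends on $f$ only through the two set maps $p$ and $q$; since these coincide with $r$ and $t$, the formulas for $f(F_A)$ and $g(F_A)$ are literally identical at every $\beta$ and $y$, whence $f(F_A)=g(F_A)$ for all $F_A$ and $f=g$.

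For the converse I would assume $f=g$ and deduce $p=r$ and $q=t$ by feeding both mappings a family of ``point-mass'' test sets that isolates a single parameter and a single point. Fix $e_0\tilde{\in}E$ and $u_0\tilde{\in}U$, and let $F_A$ be the hesitant fuzzy soft set with $A=\{e_0\}$, $F(e_0)(u_0)=\{1\}$, and $F(e_0)(u)=\{0\}$ for $u\neq u_0$. The next step is to evaluate $f(F_A)$ directly from the definition. For any $\beta$ with $\beta\neq q(e_0)$ we have $q^{-1}(\beta)\cap A=\phi$, so the union is empty and $f(F_A)(\beta)(y)=\{0\}$ for all $y$. For $\beta=q(e_0)$ the only surviving index is $\alpha=e_0$, giving $f(F_A)(q(e_0))(y)=\bigcup_{s\tilde{\in}p^{-1}(y)}F(e_0)(s)$, which equals $\{1\}$ when $y=p(u_0)$ and $\{0\}$ otherwise. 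Thus $f(F_A)$ is nonzero at exactly the single pair $(q(e_0),p(u_0))$, and the identical computation shows $g(F_A)$ is nonzero at exactly $(t(e_0),r(u_0))$.

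The hypothesis $f(F_A)=g(F_A)$ then forces these two distinguished pairs to coincide: evaluating both images at $\beta=q(e_0)$, $y=p(u_0)$ shows $g(F_A)$ must also be nonzero there, so $(q(e_0),p(u_0))=(t(e_0),r(u_0))$, i.e. $q(e_0)=t(e_0)$ and $p(u_0)=r(u_0)$ simultaneously. Since $e_0\tilde{\in}E$ and $u_0\tilde{\in}U$ were arbitrary, this yields $q=t$ and $p=r$, completing the converse. Note that a single family of test sets settles both equalities at once, so there is no need to argue $q=t$ and $p=r$ in separate stages.

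The one place the argument could go wrong, and hence the point that needs care, is the evaluation of the max-based union $\mu_1\cup\mu_2=\cup\max\{\gamma_1,\gamma_2\}$ together with the convention that an empty union returns $\{0\}$ (the convention used in Example~\ref{eam35}, where $(e_2)\mu_\phi=\{0.0\}$). One must confirm that the stray $\{0\}$ contributions coming from the other points of $U$ that happen to map to $y=p(u_0)$ do not corrupt the value $\{1\}$; because $\max\{1,0\}=1$, the max-union leaves $\{1\}$ intact, so the test set genuinely isolates the pair $(q(e_0),p(u_0))$. Everything beyond this check is direct substitution into the definitions.
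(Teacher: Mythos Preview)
Your argument is correct, but it is doing far more work than the paper does. In the paper a hesitant fuzzy soft mapping is literally \emph{defined} as the ordered pair $f=(p,q)$ (Definition~3.2), so the paper's entire proof of this theorem is the single word ``Obvious'': equality of ordered pairs is componentwise equality, and there is nothing to show. You instead interpret ``$f=g$'' extensionally, as $f(F_A)=g(F_A)$ for every $F_A$, and then recover $p=r$, $q=t$ from that. Under the paper's conventions this extensional characterisation is not the definition but the \emph{next} theorem (Theorem~3.8), so in effect you have folded the two results together and proved the harder one.

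What your approach buys is genuine content. Your ``point-mass'' test sets $A=\{e_0\}$, $F(e_0)(u_0)=\{1\}$, $F(e_0)(u)=\{0\}$ otherwise, together with the check that the max-based union $\{1\}\cup\{0\}=\{1\}$ survives stray zero contributions, give a clean proof that $f(F_A)=g(F_A)$ for all $F_A$ forces $p=r$ and $q=t$. The paper's own proof of Theorem~3.8 argues only at the level of graphs $(R,T)\tilde{\in}f$ and never actually extracts $p=r$, $q=t$ from equality of images; your test-set construction is exactly the device that fills that gap. So while your write-up is overkill for the stated theorem as the paper intends it, it would serve well as the converse half of Theorem~3.8.
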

\begin{proof}
Obvious. 	
\end{proof}
\begin{thm}
Two hesitant fuzzy soft mappings $ f $ and $ g $ of a  hesitant fuzzy soft class $\overline{(U,E)}$ into a  hesitant fuzzy soft class $\overline{(V,E')}$ are equal if and only if $ f(F_A)=g(F_A) ,$ for all $ F_A\tilde{\in}\overline{(U,E)}. $
\end{thm}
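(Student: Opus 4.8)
The plan is to reduce everything to the preceding theorem, which asserts that $f=(p,q)$ and $g=(r,t)$ coincide exactly when $p=r$ and $q=t$. The forward implication is immediate: if $f=g$ then $p=r$ and $q=t$, and since $f(F_A)$ and $g(F_A)$ are produced by the \emph{same} defining formula, they agree for every $F_A\tilde{\in}\overline{(U,E)}$. The substance of the statement is the converse, and I would prove it by contraposition, manufacturing a single ``test'' hesitant fuzzy soft set that detects any discrepancy between the two pairs of underlying maps.

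Assume $f(F_A)=g(F_A)$ for all $F_A$; I want to deduce $p=r$ and $q=t$. First I would show $q=t$. Suppose not, so there is a parameter $\alpha_0\tilde{\in}E$ with $q(\alpha_0)\neq t(\alpha_0)$. Take $A=\{\alpha_0\}$ and define $F_A$ by $\mu_{F(\alpha_0)}(s)=\{1\}$ for every $s\tilde{\in}U$. Writing $\beta_0=q(\alpha_0)$ and using the image formula $f(F_A)(\beta)(y)=\bigcup_{\alpha\tilde{\in}q^{-1}(\beta)\cap A,\, s\tilde{\in}p^{-1}(y)}\mu_{F(\alpha)}(s)$, I find that for any fixed $u_0\tilde{\in}U$ the value $f(F_A)(\beta_0)(p(u_0))$ contains $1$, because $\alpha_0\tilde{\in}q^{-1}(\beta_0)\cap A$ and $u_0\tilde{\in}p^{-1}(p(u_0))$. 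On the other hand $t^{-1}(\beta_0)\cap A=\emptyset$ (since $t(\alpha_0)\neq\beta_0$), so $g(F_A)(\beta_0)(y)=\{0\}$ for every $y$, using the convention that an empty index set yields the zero element, exactly as in the worked example where $\mu_\phi=\{0.0\}$. This contradicts $f(F_A)=g(F_A)$, forcing $q=t$.

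Next, with $q=t$ in hand, I would establish $p=r$ by the same device. Suppose $p(s_0)\neq r(s_0)$ for some $s_0\tilde{\in}U$. Fix any $\alpha_0\tilde{\in}E$, set $A=\{\alpha_0\}$ and $\beta_0=q(\alpha_0)=t(\alpha_0)$, and this time concentrate the test set at a single point: $\mu_{F(\alpha_0)}(s_0)=\{1\}$ and $\mu_{F(\alpha_0)}(s)=\{0\}$ for $s\neq s_0$. Then $f(F_A)(\beta_0)(p(s_0))$ contains $1$ because $s_0\tilde{\in}p^{-1}(p(s_0))$, whereas in $g(F_A)(\beta_0)(p(s_0))=\bigcup_{s\tilde{\in}r^{-1}(p(s_0))}\mu_{F(\alpha_0)}(s)$ the index $s_0$ is \emph{absent} (as $r(s_0)\neq p(s_0)$), so only zero hesitant fuzzy elements are united and the value is $\{0\}$. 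This again contradicts equality of the images, so $p=r$. Having shown $p=r$ and $q=t$, the previous theorem yields $f=g$.

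The only genuinely delicate point is the bookkeeping around the hesitant union $\bigcup$ together with the zero convention: I must verify that when the relevant index set is empty (or contains only points carrying the zero element) the result really is $\{0\}$, and that a single entry equal to $\{1\}$ survives the union, so that the two images are provably different at the concrete pair $(\beta_0,y)$. The argument also tacitly uses that $U$ and $E$ are nonempty so that the test sets exist; beyond this, no special properties of $p$, $q$, $r$, $t$ are required.
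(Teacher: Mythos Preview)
Your argument is correct. The forward direction matches the paper exactly, and your converse is sound: the two test sets you build genuinely distinguish $f$ from $g$ at the specified coordinates, provided one accepts the paper's convention (visible in the worked examples) that an empty index set in the image formula yields $\{0\}$ and that the hesitant union of $\{1\}$'s is $\{1\}$. Your caveats about nonemptiness of $U$ and $E$ are also appropriately flagged.

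However, the route you take for the converse is \emph{not} the paper's. The paper does not go through Theorem~3.7 at all; instead it argues purely extensionally. Recalling from Definition~3.2 that the paper regards $f$ also as the relation $\{(F_A,f(F_A)):F_A\tilde{\in}\overline{(U,E)}\}$, the converse there reads: if $(R,T)\tilde{\in}f$ then $T=f(R)=g(R)$, so $(R,T)\tilde{\in}g$; hence $f\tilde{\subseteq}g$, and symmetrically $g\tilde{\subseteq}f$. This is essentially function extensionality and is close to tautological once one accepts the relational view of $f$. Your proof, by contrast, establishes the sharper fact that the assignment $(p,q)\mapsto\bigl(F_A\mapsto f(F_A)\bigr)$ is \emph{injective}: equality of all images forces $p=r$ and $q=t$. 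That is more work, but it is also more informative, since it shows the underlying point and parameter maps are recoverable from the induced action on hesitant fuzzy soft sets; the paper's argument takes that recoverability for granted via Theorem~3.7 without ever demonstrating it.
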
	
\begin{proof}
Let	 $ f=(p,q):\overline{(U,E)}\longrightarrow\overline{(V,E')} $ and  $ g=(r,t):\overline{(U,E)}\longrightarrow\overline{(V,E')} $ are two hesitant fuzzy soft mappings. Since   $ f $ and $ g $ are equal, this implies $ p=r $ and $ q=t. $\\
Let $ \beta\tilde{\in}q(E)\tilde{\subseteq}E'$ and $ y\tilde{\in}V, $\\
$ f(F_A)(\beta)(y)=\bigcup_{\alpha\tilde{\in}q^-1(\beta)\cap A,s\tilde{\in}p^-1(y)} (\alpha)\mu_s\\
=\bigcup_{\alpha\tilde{\in}t^-1(\beta)\cap A,s\tilde{\in}r^-1(y)} (\alpha)\mu_s\\
=g(F_A)(\beta)(y).$\\
Hence 	$ f(F_A)=g(F_A) .$  \\
Conversely, Let $ f(F_A)=g(F_A) ,$ for all $ F_A\tilde{\in}\overline{(U,E)}. $\\
Let $ (R,T)\tilde{\in}f, $where $ R\tilde{\subseteq}\overline{(U,E)} $ and $ T\tilde{\subseteq}\overline{(V,E')}. $\\
Therefore $ T=f(R)=g(R), $this gives $ (R,T)\tilde{\in}g .$\\
Therefore $ f\tilde{\subseteq}g. $\\
Similarly, it can be proved that $ g\tilde{\subseteq}f. $ Hence $ f=g.$
\end{proof}
\begin{defn}
If $ f=(p,q):\overline{(U,E)}\longrightarrow\overline{(V,E')} $ and  $ g=(r,t):\overline{(V,E')}\longrightarrow\overline{(W,E'')} $ are two hesitant fuzzy soft mappings, then their composite $ g\circ f $ is a hesitant fuzzy soft mapping of $ \overline{(U,E)} $ into $ \overline{(W,E'')} $ such that for every $ F_A \tilde{\in}\overline{(U,E)},(g\circ f)(F_A)=g(f(F_A)).  $we defined as\\ for $ \beta\tilde{\in}t(E')\tilde{\subseteq}E''$ and $ y\tilde{\in}W, $
$ g(f(F_A))(\beta)(y)=\bigcup_{\alpha\tilde{\in}t^-1(\beta)\cap f(A),s\tilde{\in}r^-1(y)} (\alpha)\mu_s.$
\end{defn}
\begin{problem}
From example \ref{eam35}  and consider the hesitant fuzzy soft mapping
 $ g=(r,t):\overline{(V,E')}\longrightarrow\overline{(W,E'')} ,$where $ W=\{h_1,h_2,h_3\}, E''=\{e_1'', e_3''\}  $ and \\
 $ r(x)=h_2,r(y)=h_3, r(z)=h_2 : t(e_1')=e_3'',t(e_2')=e_3'', t(e_3')=e_1''.$ \\ 
 Therefore\\
 $ g(f(F_A))(e_1'')(h_1)=\bigcup_{\alpha\tilde{\in}t^-1(e_1'')\cap f(A),s\tilde{\in}r^-1(h_1)} (\alpha)\mu_s\\
 =\bigcup_{\alpha\tilde{\in}\{e_3'\},s\tilde{\in}\phi} (\alpha)\mu_s\\
 =(e_3')\mu_\phi=\{0.0\}$\\
 $ g(f(F_A))(e_1'')(h_2)=\bigcup_{\alpha\tilde{\in}t^-1(e_1'')\cap f(A),s\tilde{\in}r^-1(h_2)} (\alpha)\mu_s\\
 =\bigcup_{\alpha\tilde{\in}\{e_3'\},s\tilde{\in}\{x,z\}} (\alpha)\mu_s\\
 =(e_3')(\mu_x\cup\mu_z)=\{0.2,0.6\}$\\
 $ g(f(F_A))(e_1'')(h_3)=\bigcup_{\alpha\tilde{\in}t^-1(e_1'')\cap f(A),s\tilde{\in}r^-1(h_3)} (\alpha)\mu_s\\
 =\bigcup_{\alpha\tilde{\in}\{e_3'\},s\tilde{\in}\{y\}} (\alpha)\mu_s\\
 =(e_3')\mu_y=\{0.4,0.8\}.$\\
By similar calculations we get\\
$ ( g\circ f)(F_A)=g(f(F_A))=\{e_1''=\{<h_1,\{0.0\}>, <h_2, \{0.2, 0.6\}>,<h_3,\{0.4,0.8\}> \}\\e_2''=\{<h_1,\{0.0\}>, <h_2, \{0.0\}>,<h_3,\{0.0\}>\}\\e_3''=\{<h_1,\{0.0\}>, <h_2, \{0.5, 0.8, 0.9\}>,<h_3,\{0.6,0.8,0.9\}> \}\}.$	
\end{problem}
\begin{thm}\label{thm311}
Let $ f=(p,q):\overline{(U,E)}\longrightarrow\overline{(V,E')} $ and  $ g=(r,t):\overline{(V,E')}\longrightarrow\overline{(W,E'')} $ are two hesitant fuzzy soft mappings. Then
\begin{enumerate}
\item[(i)] if $ f $ and $ g $ are one-one then so is $ g\circ f .$
\item[(ii)]if $ f $ and $ g $ are onto then so is $ g\circ f .$
\item[(iii)]if $ f $ and $ g $ are both bijections then so is $ g\circ f .$
\end{enumerate}	
\end{thm}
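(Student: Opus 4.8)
The plan is to reduce the theorem to the classical facts that a composition of injective maps is injective and that a composition of surjective maps is surjective. Since the notions of one-one and onto for a hesitant fuzzy soft mapping are defined componentwise on the two underlying ordinary maps, everything hinges on identifying the pair of ordinary maps that underlie the composite $g\circ f$. So the first and only substantive step is to show that $g\circ f$ is precisely the hesitant fuzzy soft mapping $(r\circ p,\, t\circ q)$, where $r\circ p:U\longrightarrow W$ and $t\circ q:E\longrightarrow E''$.

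To establish this identification I would begin from the defining formula of the composite, namely $g(f(F_A))(\beta)(y)=\bigcup_{\alpha\tilde{\in}t^{-1}(\beta)\cap f(A),\, s\tilde{\in}r^{-1}(y)}(\alpha)\mu_s$ for $\beta\tilde{\in}t(E')\tilde{\subseteq}E''$ and $y\tilde{\in}W$, and unwind the inner hesitant fuzzy soft image $f(F_A)$ using $f=(p,q)$. Tracking a point $x\tilde{\in}U$ through the two stages of the construction, it is first carried by $p$ to $p(x)\tilde{\in}V$ and then by $r$ to $r(p(x))\tilde{\in}W$; similarly an attribute $\alpha\tilde{\in}E$ is carried by $q$ to $q(\alpha)$ and then by $t$ to $t(q(\alpha))$. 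Hence the point map and the attribute map governing $g\circ f$ are exactly $r\circ p$ and $t\circ q$, so that $g\circ f=(r\circ p,\, t\circ q)$ in the sense of the defining definition of a hesitant fuzzy soft mapping.

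With this identification in hand the three assertions are immediate. For (i), $f$ one-one gives $p$ and $q$ injective while $g$ one-one gives $r$ and $t$ injective; consequently $r\circ p$ and $t\circ q$ are injective, so $g\circ f$ is one-one. For (ii), the same argument with surjectivity in place of injectivity shows that $r\circ p$ and $t\circ q$ are onto, whence $g\circ f$ is onto. Part (iii) then follows by combining (i) and (ii), since a bijection is exactly a mapping that is simultaneously one-one and onto. The only genuine obstacle is the bookkeeping in the first step: one must check that unwinding the nested unions in the composite formula really does collapse the two stages $p$-then-$r$ and $q$-then-$t$ into the single composed maps. Once that identification is secured, nothing beyond elementary set theory is required.
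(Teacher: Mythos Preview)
Your argument is correct and is the natural way to prove the statement: once one identifies $g\circ f$ with the pair $(r\circ p,\,t\circ q)$, all three parts reduce to the elementary facts that compositions of injections are injections and compositions of surjections are surjections.

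The paper, however, does something quite different. Its ``proof'' consists entirely of a single worked example: it fixes particular three-element universes $U,V,W$, particular parameter sets $E,E',E''$, specific bijections $p,q,r,t$, and one hesitant fuzzy soft set $L_A$, and then computes $f(L_A)$ and $g(f(L_A))$ numerically. From this single instance it concludes each of (i)--(iii) with the phrase ``From above example we see that\ldots''. No general argument is given, and in particular the identification $g\circ f=(r\circ p,\,t\circ q)$ is never stated or verified. Your approach is therefore not merely different but strictly more general and rigorous: the paper's example illustrates the theorem but does not prove it, whereas your reduction to the componentwise definition of one-one and onto yields an actual proof valid for arbitrary $f$ and $g$. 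The only cost is the bookkeeping you already flagged, namely checking that the nested unions in the composite formula really collapse to the single pair $(r\circ p,\,t\circ q)$; this is routine but is the one place where care is needed with the hesitant-fuzzy union.
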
	
\begin{proof}
Let us consider the hesitant fuzzy soft mappings $f=(p,q):\overline{(U,E)}\longrightarrow\overline{(V,E')}$ and
$ g=(r,t):\overline{(V,E')}\longrightarrow\overline{(W,E'')}.$
Let $ U=\{a,b,c\} $	and $ V=\{x,y,z\}, W=\{h_1,h_2,h_3\}, E=\{e_1, e_2, e_3\}, E'=\{e_1', e_2', e_3'\},  E''=\{e_1'', e_2'', e_3''\} $ and $\overline{(U,E)}, \overline{(V,E')}, \overline{(W,E'')} $ classes of hesitant fuzzy soft sets. Let $ p(a)=z, p(b)=x, p(c)=y $ ; $ q(e_1)=e_2', q(e_2)=e_3', q(e_3)=e_1'$ and $ r(x)=h_2,r(y)=h_3, r(z)=h_1 : t(e_1')=e_3'',t(e_2')=e_1'', t(e_3')=e_2''  .$   Also we  consider a hesitant fuzzy soft set $ L_A $ in $\overline{(U,E)} $ as\\
$L_A=\{e_1=\{<a,\{0.6, 0.8\}>, <b, \{0.8, 0.4, 0.9\}>,<c,\{0.3\}> \}\\e_2=\{<a,\{0.0\}>, <b,\{0.0 \}>,<c,\{0.0\}>\\e_3=\{<a,\{0.9, 0.1, 0.2\}>, <b,\{0.5 \}>,<c,\{0.2, 0.4, 0.6\}> \}\}.$\\ 
Then the hesitant fuzzy soft image of $ F_A $ under  $ f=(p,q):\overline{(U,E)}\longrightarrow\overline{(V,E')} $is obtained as \\
$ f(L_A)(e_1')(x)=\bigcup_{\alpha\tilde{\in}q^-1(e_1')\cap A,s\tilde{\in}p^-1(x)} (\alpha)\mu_s\\
=\bigcup_{\alpha\tilde{\in}\{e_3\},s\tilde{\in}\{b\}} (\alpha)\mu_s\\
=(e_3)\mu_b=\{0.5\}$\\	
$ f(L_A)(e_1')(y)=\bigcup_{\alpha\tilde{\in}q^-1(e_1')\cap A,s\tilde{\in}p^-1(y)} (\alpha)\mu_s\\
=\bigcup_{\alpha\tilde{\in}\{e_3\},s\tilde{\in}\{c\}} (\alpha)\mu_s\\
=(e_3)\mu_c=\{0.2,0.4,0.6\}$\\	
$ f(L_A)(e_1')(z)=\bigcup_{\alpha\tilde{\in}q^-1(e_1')\cap A,s\tilde{\in}p^-1(z)} (\alpha)\mu_s\\
=\bigcup_{\alpha\tilde{\in}\{e_3\},s\tilde{\in}\{a\}} (\alpha)\mu_s\\
=(e_3)\mu_a=\{0.9,0.1,0.2\}.$\\		
By similar calculations we get\\
$ f(L_A)=\{e_1'=\{<x,\{0.5\}>, <y, \{0.2, 0.4, 0.6\}>,<z,\{0.9,0.1,0.2\}> \}\\e_2'=\{<x,\{0.8, 0.4, 0.9\}>, <y,\{0.3 \}>,<z,\{0.6,0.8\}> \}\\e_3'=\{<x,\{0.0\}>, <y,\{0.0 \}>,<z,\{ 0.0\}> \}\}.$\\ 	
 Again, \\
 $ g(f(L_A))(e_1'')(h_1)=\bigcup_{\alpha\tilde{\in}t^-1(e_1'')\cap f(A),s\tilde{\in}r^-1(h_1)} (\alpha)\mu_s\\
 =\bigcup_{\alpha\tilde{\in}\{e_2'\},s\tilde{\in}\{z\}} (\alpha)\mu_s\\
 =(e_2')\mu_z=\{0.6,0.8\}$\\
 $ g(f(L_A))(e_1'')(h_2)=\bigcup_{\alpha\tilde{\in}t^-1(e_1'')\cap f(A),s\tilde{\in}r^-1(h_2)} (\alpha)\mu_s\\
 =\bigcup_{\alpha\tilde{\in}\{e_2'\},s\tilde{\in}\{x\}} (\alpha)\mu_s\\
 =(e_2')\mu_x=\{0.8,0.4,0.9\}$\\
 $ g(f(L_A))(e_1'')(h_3)=\bigcup_{\alpha\tilde{\in}t^-1(e_1'')\cap f(A),s\tilde{\in}r^-1(h_3)} (\alpha)\mu_s\\
 =\bigcup_{\alpha\tilde{\in}\{e_2'\},s\tilde{\in}\{y\}} (\alpha)\mu_s\\
 =(e_2')\mu_y=\{0.3\}.$\\
 By similar calculations we get\\
 $( g\circ f)(L_A)= g(f(L_A))=\{e_1''=\{<h_1,\{0.6,0.8\}>, <h_2, \{0.8, 0.4,0.9\}>,<h_3,\{0.3\}> \}\\e_2''=\{<h_1,\{0.0\}>, <h_2, \{0.0\}>,<h_3,\{0.0\}> \}\\ e_3''=\{<h_1,\{0.9,0.1,0.2\}>, <h_2, \{0.5\}>,<h_3,\{0.2,0.4,0.6\}> \}\}.$ \\	
Therefore\\ 
(i) From above example we see that, if $ f $ and $ g $ are one-one then so is $ g\circ f .$\\	
(ii)	From above example we see that,if $ f $ and $ g $ are onto then so is $ g\circ f .$\\
(iii) From above example we see that,	if $ f $ and $ g $ are both bijections then so is $ g\circ f .$
%\end{enumerate}		
\end{proof}
\begin{thm}
Let us consider three hesitant fuzzy soft mappings $f:\overline{(U,E)}\longrightarrow\overline{(V,E')} ,$
$ g:\overline{(V,E')}\longrightarrow\overline{(W,E'')} $ and $ h:\overline{(W,E'')}\longrightarrow\overline{(X,E''')} .$ Then $ h\circ ( g\circ f )=( h\circ g)\circ f. $
\end{thm}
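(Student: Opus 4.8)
The plan is to reduce the associativity of composition to the associativity of ordinary composition of the set-level actions, and then lift the resulting pointwise equality back to an equality of mappings using the equality criterion proved above (two hesitant fuzzy soft mappings with common domain and codomain coincide precisely when they agree on every hesitant fuzzy soft set).

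First I would check that both sides are legitimately formed and share the same domain and codomain. Writing $f=(p,q)$, $g=(r,t)$ and $h=(u,w)$, the composite $g\circ f$ is a hesitant fuzzy soft mapping $\overline{(U,E)}\to\overline{(W,E'')}$, so $h\circ(g\circ f)$ is defined and maps $\overline{(U,E)}\to\overline{(X,E''')}$; dually $h\circ g$ maps $\overline{(V,E')}\to\overline{(X,E''')}$, so $(h\circ g)\circ f$ is also defined and maps $\overline{(U,E)}\to\overline{(X,E''')}$. Thus the two composites have matching domain and codomain, and it remains to show they act identically.

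The core step is a direct unwinding of the defining property of the composite, namely $(g\circ f)(F_A)=g(f(F_A))$ for every $F_A$. Fixing an arbitrary $F_A\tilde{\in}\overline{(U,E)}$, I would compute
\begin{align*}
\big(h\circ(g\circ f)\big)(F_A) &= h\big((g\circ f)(F_A)\big) = h\big(g(f(F_A))\big),\\
\big((h\circ g)\circ f\big)(F_A) &= (h\circ g)\big(f(F_A)\big) = h\big(g(f(F_A))\big),
\end{align*}
where each equality is a single application of the composite definition (the last line applies the definition of $h\circ g$ to the hesitant fuzzy soft set $f(F_A)\tilde{\in}\overline{(V,E')}$). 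Hence $\big(h\circ(g\circ f)\big)(F_A)=\big((h\circ g)\circ f\big)(F_A)$ for every $F_A$, and the equality criterion yields $h\circ(g\circ f)=(h\circ g)\circ f$.

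I do not expect a genuine obstacle here: associativity is inherited directly from the associativity of ordinary function composition, since the composite is defined to act as the ordinary composite of set-level maps. The only points demanding care are purely bookkeeping: confirming that the inner composites are themselves hesitant fuzzy soft mappings so that the outer composition is legitimate, and invoking the correct equality criterion. As an alternative route I could instead identify $g\circ f$ with the pair $(r\circ p,\,t\circ q)$ --- verified by matching the index sets $q^{-1}(t^{-1}(\beta))=(t\circ q)^{-1}(\beta)$ and $p^{-1}(r^{-1}(y))=(r\circ p)^{-1}(y)$ in the image formula --- and then appeal to the criterion that two hesitant fuzzy soft mappings are equal iff their underlying pairs of maps are equal, reducing the claim to $u\circ(r\circ p)=(u\circ r)\circ p$ and $w\circ(t\circ q)=(w\circ t)\circ q$, both instances of ordinary associativity.
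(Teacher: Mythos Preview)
Your proposal is correct and follows essentially the same approach as the paper: fix an arbitrary $F_A\tilde{\in}\overline{(U,E)}$, unwind the definition of composition twice on each side to obtain $h(g(f(F_A)))$, and conclude. Your write-up is more careful (explicit domain/codomain check, explicit appeal to the equality criterion, an alternative via the underlying pairs), but the core argument is identical to the paper's.
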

\begin{proof}
Let $ F_A\tilde{\in}\overline{(U,E)} .$ Now from  definition we have, \\
$[h\circ ( g\circ f )](F_A)=h[( g\circ f )(F_A)]= h[ g(f (F_A))] $\\
Also $[(h\circ  g)\circ f ](F_A)=(h\circ g)(f (F_A))= h[ g(f (F_A))].$ 
Hence $ h\circ ( g\circ f )=( h\circ g)\circ f. $
\end{proof}
\begin{defn}
A hesitant fuzzy soft mapping  $f:\overline{(U,E)}\longrightarrow\overline{(V,E')} $ 
is said to be many one hesitant fuzzy soft mapping if two (or more than two) hesitant fuzzy soft sets in $ \overline{(U,E)} $ have the same hesitant fuzzy soft image in $\overline{(V,E')}.  $
\end{defn}
\begin{problem}
From example \ref{eam35} and consider the hesitnat fuzzy soft set $ M_A\tilde{\in}\overline{(U,E)}, $\\
	$M_A=\{e_1=\{<a,\{0.6, 0.5\}>, <b, \{0.8, 0.4, 0.9\}>,<c,\{0.6,0.8\}> \}\\e_2=\{<a,\{0.3, 0.2, 0.7\}>, <b,\{0.5 \}>,<c,\{0.4, 0.1, 0.9\}> \}\\e_4=\{<a,\{0.4\}>, <b,\{0.2,0.6 \}>,<c,\{ 0.3, 0.8\}> \}\}.$\\ 
	Then the hesitant fuzzy soft image of $ M_A $ under  $ f=(p,q):\overline{(U,E)}\longrightarrow\overline{(V,E')} $ is obtained as \\
	$ f(M_A)(e_1')(x)=\bigcup_{\alpha\tilde{\in}q^-1(e_1')\cap A,s\tilde{\in}p^-1(x)} (\alpha)\mu_s\\
	=\bigcup_{\alpha\tilde{\in}\{e_2\},s\tilde{\in}\{b\}} (\alpha)\mu_s\\
	=(e_2)\mu_b=\{0.5\}$\\	
	$ f(M_A)(e_1')(y)=\bigcup_{\alpha\tilde{\in}q^-1(e_1')\cap A,s\tilde{\in}p^-1(y)} (\alpha)\mu_s\\
	=\bigcup_{\alpha\tilde{\in}\{e_2\},s\tilde{\in}\{a,c\}} (\alpha)\mu_s\\
	=(e_2)(\mu_a\cup\mu_c)=\{0.2,0.4,0.9\}$\\
	$ f(M_A)(e_1')(z)=\bigcup_{\alpha\tilde{\in}q^-1(e_1')\cap A,s\tilde{\in}p^-1(z)} (\alpha)\mu_s\\
	=\bigcup_{\alpha\tilde{\in}\{e_2\},s\tilde{\in}\phi} (\alpha)\mu_s\\
	=(e_2)\mu_\phi=\{0.0\}$\\
	$ f(M_A)(e_2')(x)=\bigcup_{\alpha\tilde{\in}q^-1(e_2')\cap A,s\tilde{\in}p^-1(x)} (\alpha)\mu_s\\
	=\bigcup_{\alpha\tilde{\in}\{e_1, e_3\}\cap \{e_1,e_2,e_4\},s\tilde{\in}\{b\}} (\alpha)\mu_s\\
	=\bigcup_{\alpha\tilde{\in}\{e_1\},s\tilde{\in}\{b\}} (\alpha)\mu_s\\
	=(e_1)\mu_b=\{0.8, 0.4, 0.9\}.$\\		
	By similar calculations we get\\
	$ f(M_A)=\{e_1'=\{<x,\{0.5\}>, <y, \{0.2, 0.4, 0.9\}>,<z,\{0.0\}> \}\\e_2'=\{<x,\{0.8, 0.4, 0.9\}>, <y,\{0.6,0.8 \}>,<z,\{0.0\}> \}\\e_3'=\{<x,\{0.2,0.6\}>, <y,\{0.4,0.8 \}>,<z,\{ 0.0\}> \}\}.$\\ 	
Therefore  $ f(M_A)=f(F_A). $
Hence $ f $ is many one hesitant fuzzy soft mapping. 
\end{problem}
\begin{defn}
Let $i=(p,q):\overline{(U,E)}\longrightarrow\overline{(U,E)} $ be a hesitant fuzzy soft mapping, where $ p:U\longrightarrow U $ and $ q:E\longrightarrow E .$ Then $ f $ is said to be a hesitant fuzzy soft identity  mapping if both $ p $ and $ q $ are identity mappings.
\end{defn}

\begin{rem} $i=(p,q):\overline{(U,E)}\longrightarrow\overline{(U,E)} $ be a hesitant fuzzy soft identity mapping, then  $i(F_A)=F_A  ,$ where $ F_A\tilde{\in}\overline{(U,E)}. $ 
\end{rem}
\begin{thm}
Let $f=(p,q):\overline{(U,E)}\longrightarrow\overline{(V,E')} $ be a hesitant fuzzy soft mapping and let $i=(l,m):\overline{(U,E)}\longrightarrow\overline{(U,E)} $ and  $j=(g,h):\overline{(V,E')}\longrightarrow\overline{(V,E')} $ are  hesitant fuzzy soft identity mappings then $ f\circ i =f $ and 	$ j\circ f =f .$
\end{thm}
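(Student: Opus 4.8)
The plan is to reduce both equalities to the characterizations of equality of hesitant fuzzy soft mappings proved earlier in this section, together with the Remark stating that a hesitant fuzzy soft identity mapping fixes every hesitant fuzzy soft set. No genuinely hard computation is needed; the whole argument is an unwinding of the definition of the composite mapping.

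First I would establish $f\circ i=f$. Fix an arbitrary $F_A\tilde{\in}\overline{(U,E)}$. By the definition of the composite of two hesitant fuzzy soft mappings, $(f\circ i)(F_A)=f(i(F_A))$. Since $i=(l,m)$ is a hesitant fuzzy soft identity mapping on $\overline{(U,E)}$ (so $l$ and $m$ are the identity maps on $U$ and $E$), the Remark gives $i(F_A)=F_A$, and hence $(f\circ i)(F_A)=f(F_A)$. As $F_A$ was arbitrary, the theorem asserting that two hesitant fuzzy soft mappings are equal if and only if they agree on every hesitant fuzzy soft set in the domain yields $f\circ i=f$.

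Next I would treat $j\circ f=f$ in exactly the same spirit. For an arbitrary $F_A\tilde{\in}\overline{(U,E)}$, the definition of composition gives $(j\circ f)(F_A)=j(f(F_A))$. Here $f(F_A)$ is a hesitant fuzzy soft set lying in $\overline{(V,E')}$, and since $j=(g,h)$ is a hesitant fuzzy soft identity mapping on $\overline{(V,E')}$, applying the Remark in that class gives $j(f(F_A))=f(F_A)$. Thus $(j\circ f)(F_A)=f(F_A)$ for every $F_A\tilde{\in}\overline{(U,E)}$, and the same equality-of-mappings theorem closes this case.

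Alternatively, and perhaps more cleanly, I can argue directly at the level of component maps: the composite of $f=(p,q)$ with $i=(l,m)$ has components $(p\circ l,\,q\circ m)$, and because $l,m$ are identities we have $p\circ l=p$ and $q\circ m=q$; similarly $j\circ f$ has components $(g\circ p,\,h\circ q)=(p,q)$ since $g,h$ are identities. The first theorem of this section, which states that $f=g$ precisely when their component maps coincide, then settles both equalities at once. I do not expect any real obstacle; the only point requiring care is to invoke the Remark in the correct class—in $\overline{(U,E)}$ for $f\circ i$ and in $\overline{(V,E')}$ for $j\circ f$—and to keep track of the order of composition so that the identity is composed on the appropriate side.
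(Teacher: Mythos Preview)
Your argument is correct and its logical core---$i(F_A)=F_A$, hence $(f\circ i)(F_A)=f(i(F_A))=f(F_A)$, and therefore $f\circ i=f$ by the equality-of-mappings theorem---is exactly the chain the paper writes at the end of its proof. The difference is only in presentation: the paper does not argue in general but instead works through the concrete data of Example~\ref{eam35}, explicitly computing $i(F_A)$ parameter by parameter to verify $i(F_A)=F_A$ in that instance, and then draws the same implications you do. Your version, by invoking the Remark directly and quantifying over all $F_A$, is a genuine proof rather than an illustrative verification; your alternative via the component maps $(p\circ l,\,q\circ m)=(p,q)$ together with Theorem~3.7 is cleaner still and does not appear in the paper.
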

\begin{proof} Consider the following example. 	
We consider $ F_A $ from example \ref{eam35}  and consider the hesitant fuzzy soft mappings $i=(p,q):\overline{(U,E)}\longrightarrow\overline{(U,E)}, $ where  $ p:U\longrightarrow U $ and $ q:E\longrightarrow E ,$ such that 
$ p(a)=a, p(b)=b, p(c)=c $ ; $ q(e_1)=e_1, q(e_2)=e_2, q(e_3)=e_3,q(e_4)=e_4. $ Therefore,\\
$ i(F_A)(e_1)(a)=\bigcup_{\alpha\tilde{\in}q^-1(e_1)\cap A,s\tilde{\in}p^-1(a)} (\alpha)\mu_s\\
=\bigcup_{\alpha\tilde{\in}\{e_1\},s\tilde{\in}\{a\}} (\alpha)\mu_s\\
=(e_1)\mu_a=\{0.6,0.8\}$\\	
$ i(F_A)(e_1)(b)=\bigcup_{\alpha\tilde{\in}q^-1(e_1)\cap A,s\tilde{\in}p^-1(b)} (\alpha)\mu_s\\
=\bigcup_{\alpha\tilde{\in}\{e_1\},s\tilde{\in}\{b\}} (\alpha)\mu_s\\
=(e_1)(\mu_b)=\{0.8,0.4,0.9\}$\\	
$ i(F_A)(e_1)(c)=\bigcup_{\alpha\tilde{\in}q^-1(e_1)\cap A,s\tilde{\in}p^-1(c)} (\alpha)\mu_s\\
=\bigcup_{\alpha\tilde{\in}\{e_1\},s\tilde{\in}\{c\}} (\alpha)\mu_s\\
=(e_1)\mu_c=\{0.3\}.$\\
By similar calculations we get\\
$ i(F_A)=\{e_1=\{<a,\{0.6, 0.8\}>, <b, \{0.8, 0.4, 0.9\}>,<c,\{0.3\}> \}\\e_2=\{<a,\{0.9, 0.1, 0.2\}>, <b,\{0.5 \}>,<c,\{0.2, 0.4, 0.6\}> \}\\e_3=\{<a,\{0.0\}>, <b,\{0.0 \}>,<c,\{0.0\}>\\e_4=\{<a,\{0.3\}>, <b,\{0.2,0.6 \}>,<c,\{ 0.4, 0.8\}> \}\}.$\\ 	
Hence $i(F_A)=F_A\Rightarrow f(i(F_A))=f(F_A)\Rightarrow (f\circ i)(F_A) =f(F_A) \Rightarrow f\circ i =f.$\\
Similarly we get $ f(F_A)\tilde{\in}\overline{(V,E')} $ and 
 $ j(f(F_A))=f(F_A)\Rightarrow(j\circ f)(F_A)=f(F_A). $ \\
Hence
$ j\circ f =f. $
\end{proof}
\begin{defn}
 A one-one onto hesitant fuzzy soft mapping $f=(p,q):\overline{(U,E)}\longrightarrow\overline{(V,E')} $ is called hesitant fuzzy soft invertable mapping. Its hesitant fuzzy soft inverse mapping is denoted by $f^{-1}=(p^{-1},q^{-1}):\overline{(V,E')}\longrightarrow\overline{(U,E)}. $
\end{defn}
\begin{rem}
In a hesitant fuzzy soft invertable mapping $f:\overline{(U,E)}\longrightarrow\overline{(V,E')} , $ for $ F_A \tilde{\in}\overline{(U,E)},G_B \tilde{\in}\overline{(V,E')}, $ 
$ f^{-1}(G_B)=F_A,$ whenever  $f(F_A)=G_B. $
\end{rem}

\begin{problem}
From proof of the Theorem \ref{thm311}. We consider $ f(L_A)=G_B. $ Therefore, \\
$ f^{-1}(G_B)(e_1)(a)=(q(e_1))\mu_{p(a)}=(e_2')\mu_z=\{0.6,0.8\} $\\$ f^{-1}(G_B)(e_1)(b)=(q(e_1))\mu_{p(b)}=(e_2')\mu_x=\{0.8,0.4,0.9\} $\\
$f^{-1}(G_B)(e_1)(c)=(q(e_1))\mu_{p(c)}=(e_2')\mu_y=\{0.3\} .$\\
By similar calculations we get\\
$f^{-1}(G_B)=\{e_1=\{<a,\{0.6, 0.8\}>, <b, \{0.8, 0.4, 0.9\}>,<c,\{0.3\}> \}\\e_2=\{<a,\{0.0\}>, <b,\{0.0 \}>,<c,\{0.0\}>\\e_3=\{<a,\{0.9, 0.1, 0.2\}>, <b,\{0.5 \}>,<c,\{0.2, 0.4, 0.6\}> \}\}.$\\ 
Hence $ f^{-1}(G_B)=L_A. $
\end{problem}
\begin{thm}
Let  $f:\overline{(U,E)}\longrightarrow\overline{(V,E')} $ be a hesitant fuzzy soft invertable mapping. Therefore its hesitant fuzzy soft inverse mapping is unique.  
\end{thm}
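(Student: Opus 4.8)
The plan is to establish uniqueness by the classical two-sided-inverse argument, leaning on the associativity of composition and the identity-mapping theorem already proved in this section. First I would fix precisely the defining property of a hesitant fuzzy soft inverse: a mapping $g=(r,t):\overline{(V,E')}\longrightarrow\overline{(U,E)}$ is an inverse of $f$ exactly when $g\circ f=i$ and $f\circ g=j$, where $i$ and $j$ denote the hesitant fuzzy soft identity mappings on $\overline{(U,E)}$ and $\overline{(V,E')}$ respectively. Since $f$ is one-one and onto, both $p$ and $q$ are bijections of sets, so such a $g$ exists; the content of the theorem is that it is the only one.

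Next I would suppose that $g=(r,t)$ and $h=(s,u)$ are two hesitant fuzzy soft inverses of $f$, so that $g\circ f=i$ and $f\circ h=j$ hold (together with the symmetric identities). Applying the identity-mapping theorem in the forms $g\circ j=g$ and $i\circ h=h$, and then the associativity theorem, I would compute
\[
g=g\circ j=g\circ(f\circ h)=(g\circ f)\circ h=i\circ h=h,
\]
which forces $g=h$ and yields uniqueness at once.

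As an alternative route that stays closer to the first theorem of this section, I would argue componentwise. Because $f=(p,q)$ is a bijective correspondence, $p$ and $q$ are ordinary bijections of sets and hence possess unique set-theoretic inverses $p^{-1}$ and $q^{-1}$. Any inverse $g=(r,t)$ of $f$ must then have $r$ inverting $p$ and $t$ inverting $q$, so that $r=p^{-1}$ and $t=q^{-1}$; the equality criterion that two hesitant fuzzy soft mappings coincide if and only if their first and second components agree then gives $g=f^{-1}$ directly.

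The main obstacle I anticipate is not the algebra, which is routine, but making the defining property of the inverse explicit, since the definition as stated only names $f^{-1}=(p^{-1},q^{-1})$ without spelling out the two-sided identity relation it must satisfy. Once that characterizing property is pinned down, both the abstract cancellation computation and the componentwise argument go through with no further difficulty.
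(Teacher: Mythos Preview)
Your proposal is correct, but the paper proceeds by a different route than either of yours. Instead of the two-sided identity cancellation or the componentwise reduction to the uniqueness of $p^{-1}$ and $q^{-1}$, the paper works directly from the characterization in the remark immediately preceding the theorem: an inverse sends $G_B$ to $F_A$ whenever $f(F_A)=G_B$. Given two putative inverses, each maps an arbitrary $G_B$ to some preimage under $f$; since $f$ is one-one these preimages coincide, so the two inverses agree on every $G_B$ and hence are equal. Your abstract cancellation argument is the cleanest and most categorical, needing only the associativity and identity results already proved in the section; your componentwise argument is closest to Definition~3.18 itself and makes the statement nearly tautological. The paper's argument sits instead at the level of the induced action on hesitant fuzzy soft sets and leans on injectivity there---a slightly less direct path, but one that exploits the preceding remark explicitly. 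The concern you flag about pinning down the defining property of the inverse is well placed: the paper silently takes that remark as the working characterization.
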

\begin{proof}
Let $  f^{-1}$ and $  g^{-1}$ are two hesitant fuzzy soft inverse mappings of $ f. $ Therefore,\\
$ f^{-1}(G_B)=F_A,$ whenever  $f(F_A)=G_B, F_A\tilde{\in}\overline{(U,E)},  G_B\tilde{\in}\overline{(V,E')},$ and\\
$ g^{-1}(G_B)=H_A,$ whenever  $g(H_A)=G_B, H_A\tilde{\in}\overline{(U,E)},  G_B\tilde{\in}\overline{(V,E')} .$ 
Thus $  f(F_A)=g(H_A). $\\
Since $ f $ is one-one, therefore $  F_A=H_A. $
Hence $ f^{-1}(G_B)= g^{-1}(G_B)  $ i.e $  f^{-1}=g^{-1}.$ 
\end{proof}
\begin{thm}
Let  $f:\overline{(U,E)}\longrightarrow\overline{(V,E')} $ and $g:\overline{(V,E')}\longrightarrow\overline{(W,E'')} $ are two one-one onto hesitant fuzzy soft mappings. If  $f^{-1}:\overline{(V,E')}\longrightarrow\overline{(U,E)} $ and $g^{-1}:\overline{(W,E'')}\longrightarrow\overline{(V,E')} $ are hesitant fuzzy soft inverse mappings of $ f $ and $ g, $
respectively, then the  inverse of the mapping  $g\circ f:\overline{(U,E)}\longrightarrow\overline{(W,E'')} $ is the hesitant fuzzy soft mapping $f^{-1}\circ g^{-1}:\overline{(W,E'')}\longrightarrow\overline{(U,E)}. $ 
\end{thm}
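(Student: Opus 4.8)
The plan is to bypass all computation with the defining functions and instead to reduce the statement to the characterization of the inverse recorded in the remark on invertible mappings, namely that for an invertible $h$ one has $h^{-1}(G)=F$ precisely when $h(F)=G$. Since the preceding uniqueness theorem guarantees that an invertible hesitant fuzzy soft mapping has exactly one inverse, it will suffice to show that the single candidate $f^{-1}\circ g^{-1}$ obeys this defining relation for $g\circ f$; uniqueness then forces it to be $(g\circ f)^{-1}$.

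First I would set up notation and check that every object in sight exists. Writing $f=(p,q)$ and $g=(r,t)$, the bijectivity of $p,q,r,t$ makes $f^{-1}=(p^{-1},q^{-1})$ and $g^{-1}=(r^{-1},t^{-1})$ well defined, and by Theorem \ref{thm311} the composite $g\circ f$ is again one-one and onto, hence invertible, so the uniqueness theorem genuinely applies to it. Next I would run the key chase: take arbitrary $F_A\tilde{\in}\overline{(U,E)}$ and $H_C\tilde{\in}\overline{(W,E'')}$ with $(g\circ f)(F_A)=H_C$, and set $G_B=f(F_A)\tilde{\in}\overline{(V,E')}$. Then $g(G_B)=g(f(F_A))=(g\circ f)(F_A)=H_C$, so the remark applied to $g$ yields $g^{-1}(H_C)=G_B$, while $f(F_A)=G_B$ together with the remark applied to $f$ yields $f^{-1}(G_B)=F_A$.

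Combining these gives $(f^{-1}\circ g^{-1})(H_C)=f^{-1}\!\left(g^{-1}(H_C)\right)=f^{-1}(G_B)=F_A$, which is exactly the relation defining the inverse of $g\circ f$. By the uniqueness theorem this relation determines the inverse completely, so $(g\circ f)^{-1}=f^{-1}\circ g^{-1}$, as claimed.

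The step I expect to require the most care is not the algebra but the justification that the uniqueness theorem is legitimately available, i.e. that $g\circ f$ is genuinely invertible; this is precisely the content imported from Theorem \ref{thm311}, and it must be flagged before invoking uniqueness. A parallel, marginally more calculational route would be to verify that $f^{-1}\circ g^{-1}$ is a two-sided inverse: using the associativity theorem and the identity-mapping theorem one computes $(g\circ f)\circ(f^{-1}\circ g^{-1})=g\circ(f\circ f^{-1})\circ g^{-1}=g\circ g^{-1}$, the identity mapping on $\overline{(W,E'')}$, and symmetrically $(f^{-1}\circ g^{-1})\circ(g\circ f)$ is the identity on $\overline{(U,E)}$. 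On that route the delicate point would instead be confirming that $f\circ f^{-1}$ is the hesitant fuzzy soft identity mapping in the strict sense of its definition, which reduces to the obvious facts $p\circ p^{-1}=\mathrm{id}$ and $q\circ q^{-1}=\mathrm{id}$.
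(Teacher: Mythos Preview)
Your argument is correct, and in fact it is exactly the element-chase the paper uses for its final theorem (which restates the present one as $(g\circ f)^{-1}=f^{-1}\circ g^{-1}$): pick $F_A$, set $G_B=f(F_A)$ and $H_C=g(G_B)$, invoke the remark twice to get $g^{-1}(H_C)=G_B$ and $f^{-1}(G_B)=F_A$, and compare with $(g\circ f)^{-1}(H_C)=F_A$. For the statement actually under review, the paper's own proof is the single word ``Obvious,'' so your write-up is a faithful (and more careful) expansion of what the authors intended.
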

\begin{proof}
Obvious.
\end{proof}
\begin{thm}
A hesitant fuzzy soft mapping $f:\overline{(U,E)}\longrightarrow\overline{(V,E')} $ is invertable  if and only if there exists a hesitant fuzzy soft inverse mapping $f^{-1}:\overline{(V,E')}\longrightarrow\overline{(U,E)}$ such that  
$ f^{-1}\circ f =i_{\overline{(U,E)}} $ and $ f\circ f^{-1} =i_{\overline{(V,E')}}, $ where $ i_{\overline{(U,E)}} $ and $ i_{\overline{(V,E')}} $ is  hesitant fuzzy soft identity mapping on $ \overline{(U,E)} $  and $\overline{(V,E')}, $ respectively. 
\end{thm}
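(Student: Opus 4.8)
The plan is to reduce the statement to the classical set-theoretic fact that a map between sets is a bijection precisely when it admits a two-sided inverse, and then to transport this through the identification of a hesitant fuzzy soft mapping $f=(p,q)$ with its pair of component maps $p:U\to V$ and $q:E\to E'$. The pivotal observation I would record first is that composition is componentwise: if $f=(p,q)$ and $g=(r,t)$, then $g\circ f=(r\circ p,\,t\circ q)$ as hesitant fuzzy soft mappings. This is read off from the composition formula in the relevant definition, since the preimages $q^{-1}(\beta)\cap A$ and $p^{-1}(y)$ assemble, after passing through $g$, into $(t\circ q)^{-1}$ and $(r\circ p)^{-1}$ preimages; I would also check that the ``otherwise'' (zero) branches of the image formula are preserved, which is the one place demanding real care. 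Combined with the earlier theorem that two hesitant fuzzy soft mappings are equal if and only if their component maps agree, this lets me translate every identity between composite mappings into an identity between ordinary set maps.

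For the forward implication, suppose $f=(p,q)$ is invertable, so that by definition $p$ and $q$ are both one-one and onto, i.e.\ bijections. Then the ordinary inverses $p^{-1}:V\to U$ and $q^{-1}:E'\to E$ exist and satisfy $p^{-1}\circ p=\mathrm{id}_U$, $p\circ p^{-1}=\mathrm{id}_V$, $q^{-1}\circ q=\mathrm{id}_E$ and $q\circ q^{-1}=\mathrm{id}_{E'}$. I would set $f^{-1}=(p^{-1},q^{-1})$ and apply the componentwise composition rule to obtain $f^{-1}\circ f=(p^{-1}\circ p,\,q^{-1}\circ q)=(\mathrm{id}_U,\mathrm{id}_E)$ and $f\circ f^{-1}=(p\circ p^{-1},\,q\circ q^{-1})=(\mathrm{id}_V,\mathrm{id}_{E'})$. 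Since a pair of identity set maps is exactly the data of a hesitant fuzzy soft identity mapping, these composites equal $i_{\overline{(U,E)}}$ and $i_{\overline{(V,E')}}$ respectively, which supplies the required inverse.

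For the converse, suppose some hesitant fuzzy soft mapping $f^{-1}=(p',q')$ satisfies $f^{-1}\circ f=i_{\overline{(U,E)}}$ and $f\circ f^{-1}=i_{\overline{(V,E')}}$. Writing both sides componentwise and invoking the equality theorem, the first equation forces $p'\circ p=\mathrm{id}_U$ and $q'\circ q=\mathrm{id}_E$, while the second forces $p\circ p'=\mathrm{id}_V$ and $q\circ q'=\mathrm{id}_{E'}$. Hence $p$ and $q$ each possess a two-sided set-theoretic inverse, so both are bijections; by definition this means $f$ is one-one and onto, i.e.\ invertable. I expect the main obstacle to lie entirely in the first step, namely establishing the componentwise composition identity together with the matching of the zero branches; once that bookkeeping is settled, both directions are short applications of the equality theorem and the set-level bijection criterion.
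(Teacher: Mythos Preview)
Your proposal is correct and in fact more complete than the paper's own proof, which only treats the forward implication. The paper argues at the level of hesitant fuzzy soft sets rather than components: it invokes the Remark that for an invertable $f$ one has $f^{-1}(G_B)=F_A$ whenever $f(F_A)=G_B$, and then computes $(f^{-1}\circ f)(F_A)=f^{-1}(f(F_A))=f^{-1}(G_B)=F_A$ directly, with the other composite handled ``similarly'' and the converse left unaddressed. Your route via the componentwise identity $g\circ f=(r\circ p,\,t\circ q)$ together with Theorem~3.7 is a genuinely different decomposition: it trades the pointwise argument on soft sets for a reduction to the classical two-sided-inverse criterion on $p$ and $q$. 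The gain is that the converse falls out immediately, whereas the paper omits it; the cost is the bookkeeping you correctly flag, namely verifying that the composition formula of Definition~3.9 agrees with the image map of the pair $(r\circ p,\,t\circ q)$, zero branches included. That verification is routine (the iterated union over $r^{-1}(y)$ and then $p^{-1}(\cdot)$ collapses to a union over $(r\circ p)^{-1}(y)$, and likewise for the attribute side), but it is not stated anywhere in the paper, so your argument is self-contained in a way the paper's is not.
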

\begin{proof}
 Let  $f:\overline{(U,E)}\longrightarrow\overline{(V,E')} $ be a hesitant fuzzy soft  invertable mapping. Therfore by definition we have\\
  $ f^{-1}(G_B)=F_A, whenever  f(F_A)=G_B, F_A\tilde{\in}\overline{(U,E)},  G_B\tilde{\in}\overline{(V,E')}.$\\
Since $ (f^{-1}\circ f)(F_A)=f^{-1}(f(F_A))=f^{-1}(G_B)=F_A.$
Therefore $ f^{-1}\circ f =i_{\overline{(U,E)}} .$\\
Similarly, we prove that $ f\circ f^{-1} =i_{\overline{(V,E')}}.$ 
\end{proof}
\begin{thm}
If  $f:\overline{(U,E)}\longrightarrow\overline{(V,E')} $  and  $g:\overline{(V,E')}\longrightarrow\overline{(W,E'')} $ are  two one-one onto hesitant fuzzy soft mapping then $  (g\circ f)^{-1}= f^{-1}\circ g^{-1} .$
\end{thm}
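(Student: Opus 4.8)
The plan is to reduce the statement to the characterization of invertibility and the uniqueness of the inverse already established above, rather than to compute anything with the defining coordinatewise formulas. First I would observe that since $f$ and $g$ are both one-one and onto, Theorem \ref{thm311}(iii) guarantees that $g\circ f$ is again a bijection, hence invertible; so $(g\circ f)^{-1}$ genuinely exists and the asserted equation is an identity between two honestly defined mappings. Likewise $f^{-1}$ and $g^{-1}$ exist by the invertibility of $f$ and $g$, and their composite $f^{-1}\circ g^{-1}:\overline{(W,E'')}\longrightarrow\overline{(U,E)}$ has exactly the domain and co-domain required of a candidate for $(g\circ f)^{-1}$.

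The core of the argument is to verify that $f^{-1}\circ g^{-1}$ satisfies the two-sided identity equations characterizing the inverse of $g\circ f$. Using the associativity of composition of hesitant fuzzy soft mappings (proved earlier) together with the relations $g^{-1}\circ g=i_{\overline{(V,E')}}$, $f^{-1}\circ f=i_{\overline{(U,E)}}$ and the identity law $i_{\overline{(V,E')}}\circ f=f$, I would compute
\begin{align*}
(f^{-1}\circ g^{-1})\circ(g\circ f)
&= f^{-1}\circ\big((g^{-1}\circ g)\circ f\big)\\
&= f^{-1}\circ\big(i_{\overline{(V,E')}}\circ f\big)\\
&= f^{-1}\circ f = i_{\overline{(U,E)}}.
\end{align*}
By the symmetric computation, using $f\circ f^{-1}=i_{\overline{(V,E')}}$ and $g\circ g^{-1}=i_{\overline{(W,E'')}}$, one obtains $(g\circ f)\circ(f^{-1}\circ g^{-1})=i_{\overline{(W,E'')}}$.

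Finally I would invoke the characterization theorem, which states that a mapping whose two-sided composite with $g\circ f$ returns the relevant identity mappings is precisely an inverse of $g\circ f$, together with the uniqueness-of-inverse theorem, to conclude that $(g\circ f)^{-1}=f^{-1}\circ g^{-1}$. The proof carries no real obstacle beyond bookkeeping: the only point demanding care is tracking the three distinct identity mappings $i_{\overline{(U,E)}}$, $i_{\overline{(V,E')}}$, $i_{\overline{(W,E'')}}$ on the correct classes and grouping the parentheses so that each cancellation $h^{-1}\circ h=i$ is applied on the class where it is valid. Given the example-driven style of the surrounding proofs, the main \emph{hard part} is simply committing to this abstract algebraic manipulation instead of recomputing soft images coordinatewise.
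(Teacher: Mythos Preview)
Your argument is correct. It differs from the paper's proof, which is element-chasing rather than equational: the paper fixes $F_A\tilde{\in}\overline{(U,E)}$, sets $G_B=f(F_A)$ and $H_C=g(G_B)$, and then simply evaluates both sides on $H_C$, obtaining $(g\circ f)^{-1}(H_C)=F_A$ from the bijectivity of $g\circ f$ and $(f^{-1}\circ g^{-1})(H_C)=f^{-1}(G_B)=F_A$ by unwinding the two inverses in turn; equality of mappings then follows from Theorem~3.8. Your route instead packages the same content through associativity, the identity laws, and the characterization/uniqueness theorems already proved, so you never touch an element of any class. The paper's proof is shorter and uses only the bare definitions, while yours shows how the earlier structural results (associativity, the identity theorem, the characterization of invertibility, uniqueness of inverses) combine to give the statement for free---the standard monoid-style argument. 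One small point worth tightening: in the paper, ``the inverse'' $f^{-1}$ is \emph{defined} pointwise (Definition~3.18/Remark~3.19), not as ``any two-sided inverse'', so when you invoke uniqueness you are implicitly using that any map $h$ with $h\circ(g\circ f)=i_{\overline{(U,E)}}$ and $(g\circ f)\circ h=i_{\overline{(W,E'')}}$ coincides with the definitional $(g\circ f)^{-1}$; this follows immediately from the characterization theorem plus the sandwich $h=h\circ i=h\circ((g\circ f)\circ(g\circ f)^{-1})=(h\circ(g\circ f))\circ(g\circ f)^{-1}=(g\circ f)^{-1}$, but it is worth saying explicitly.
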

\begin{proof}
Since $ f $ and $ g $ are  one-one onto hesitant fuzzy soft mapping, then there exists  $f^{-1}:\overline{(V,E')}\longrightarrow\overline{(U,E)}$ and $g^{-1}:\overline{(W,E'')} \longrightarrow\overline{(V,E')}$ such that \\
 $ f^{-1}(G_B)=F_A, whenever  f(F_A)=G_B, F_A\tilde{\in}\overline{(U,E)},  G_B\tilde{\in}\overline{(V,E')} ,$ and\\
 $ g^{-1}(H_C)=G_B, whenever  g(G_B)=H_C, H_C\tilde{\in}\overline{(W,E'')},  G_B\tilde{\in}\overline{(V,E')}.$\\
Therefore, $  (g\circ f)(F_A) =g[f(F_A)]=g(G_B)=H_C .$\\ 
As $ g\circ f $ is one-one onto, therefore $ (g\circ f)^{-1}  $ exists such that \\
$  (g\circ f)(F_A)=H_C\Rightarrow (g\circ f)^{-1}(H_C)=F_A. $\\ 
Also  $  (f^{-1}\circ g^{-1})(H_C) =f^{-1}[g^{-1}(H_C)]=f^{-1}(G_B)=F_A.$\\
Hence $(g\circ f)^{-1}(H_C)= (f^{-1}\circ g^{-1})(H_C) \Rightarrow (g\circ f)^{-1}= f^{-1}\circ g^{-1} . $
\end{proof}

\end{document}